 \newtheorem{thm}{Theorem }
 \newtheorem{prob}{Problem }
 \newtheorem{defn}{Definition }
 \newtheorem{cl}{Claim }
\title{Blocking unions of arborescences}
\author{Attila Bern\'ath\thanks{
MTA-ELTE Egerv\'ary Research Group,
Department of Operations Research, E\"otv\"os University, P\'azm\'any P\'eter s\'et\'any 1/C, Budapest, Hungary, H-1117.
Part of the research was done while the author was at Warsaw University, Institute of Informatics, ul. Banacha 2, 02-097 Warsaw,  Poland. Research was supported by the ERC StG project PAAl no. 259515.
E-mail: {\tt bernath@cs.elte.hu}.} 
\and 
Gyula Pap\thanks{MTA-ELTE Egerv\'ary Research Group,
Department of Operations Research, E\"otv\"os University, P\'azm\'any P\'eter s\'et\'any 1/C, Budapest, Hungary, H-1117. Supported by OTKA grant no.\ K109240.
E-mail: {\tt gyuszko@cs.elte.hu}.} 
}
\begin{document}

\maketitle

\makeatletter

\newcounter{Kulso}

\newcommand\ujszint{\stepcounter{Kulso}}

\newenvironment{pszkod}[2][1]
{
  \setcounter{Kulso}{#1}
  \sffamily
  \vspace{\topsep}              
  \noindent
  {#2}
  \begin{list}{\rmfamily\arabic{Kulso}.\@arabic\c@enumiv.}
    {\usecounter{enumiv}%
     \renewcommand\p@enumiv{\arabic{Kulso}.}
     \renewcommand\theenumiv{\@arabic\c@enumiv}
     \setlength{\parsep}{0pt}
     \setlength{\itemsep}{0pt}
     \setlength{\topsep}{0pt}
    }
  \psz@cimke{begin}
}
{
  \psz@cimke{end}
  \end{list}
  \vspace{\topsep}
}

\newenvironment{pszkodkomment}[3][1]
{
  \setcounter{Kulso}{#1}
  \ttfamily
  \vspace{\topsep}              
  \noindent
  \textsc{#2}
  \begin{list}{\rmfamily\arabic{Kulso}.\@arabic\c@enumiv.}
    {\usecounter{enumiv}%
     \renewcommand\p@enumiv{\arabic{Kulso}.}
     \renewcommand\theenumiv{\@arabic\c@enumiv}
     \setlength{\parsep}{0pt}
     \setlength{\itemsep}{0pt}
     \setlength{\topsep}{0pt}}
  \psz@cimke{\%} #3
  \psz@cimke{begin}
}
{
  \psz@cimke{end}
  \end{list}
  \vspace{\topsep}
}

\newlength{\tabhossz}
\setlength{\tabhossz}{0.68cm}    
\newcommand{\tab}{\par
\advance\@totalleftmargin \tabhossz
\advance\linewidth -\tabhossz
\parshape \@ne \@totalleftmargin \linewidth
\addtolength{\labelsep}{\tabhossz}%
}
\newcommand{\untab}{\par
\advance\@totalleftmargin -\tabhossz   
\advance\linewidth \tabhossz
\parshape \@ne \@totalleftmargin \linewidth
\addtolength{\labelsep}{-\tabhossz}%
}

\newcommand{\psz@cimke}[1]{\item[{\makebox[\labelwidth][l]{#1}}]}

\makeatother

\newcommand{\A}{\mathcal{A}}
\newcommand{\I}{\mathcal{I}} 
\newcommand{\s}{\mathcal{S}}
\newcommand{\T}{\mathcal{T}}
\newcommand{\p}{\mathcal{P}}
\newcommand{\R}{\mathcal{R}}
\newcommand{\Z}{\mathcal{Z}}
\newcommand{\K}{\mathcal{K}}
\newcommand{\X}{\mathcal{X}}

\newcommand{\cL}{\ensuremath{\mathcal{L}}}
\newcommand{\cE}{\ensuremath{\mathcal{E}}}
\newcommand{\cF}{\ensuremath{\mathcal{F}}}
\newcommand{\cX}{\ensuremath{\mathcal{X}}}
\newcommand{\cP}{\ensuremath{\mathcal{P}}}
\newcommand{\tD}{\tilde D}
\newcommand{\fixme}[1]{\textbf{FIX ME!!! #1}}

\newcommand{\Rset}{\mathbb{R}}
\newcommand{\Zset}{\mathbb{Z}}

\newcommand{\karb}{$k$-union-arborescence}
\newcommand{\krarb}{$k$-union-$r$-arborescence}

\begin{quote}
{\bfseries Abstract:} Given a digraph $D=(V,A)$ and a positive integer
$k$, a subset $B\subseteq A$ is called a \textbf{\karb}, if it is the
disjoint union of $k$ spanning arborescences. When also arc-costs
$c:A\to \Rset$ are given, minimizing the cost of a \karb\ is
well-known to be tractable. In this paper we take on the following
problem: what is the minimum cardinality of a set of arcs the removal
of which destroys every minimum $c$-cost \karb.  Actually, the more
general weighted problem is also considered, that is, arc weights
$w:A\to \Rset_+$ (unrelated to $c$) are also given, and the goal is to
find a minimum weight set of arcs the removal of which destroys every
minimum $c$-cost \karb. An equivalent version of this problem is where
the roots of the arborescences are fixed in advance. In an earlier
paper [A. Bern{\'a}th and G. Pap, \emph{Blocking optimal
arborescences}, Integer Programming and Combinatorial Optimization,
Springer, 2013] we solved this problem for $k=1$. This work reports on
other partial results on the problem. We solve the case when both $c$
and $w$ are uniform -- that is, find a minimum size set of arcs that
covers all $k$-union-arbosercences. Our algorithm runs in polynomial
time for this problem.  The solution uses a result of
[M. B{\'a}r{\'a}sz, J. Becker, and A. Frank, \emph{An algorithm for
source location in directed graphs}, Oper. Res. Lett. \textbf{33}
(2005)] saying that the family of so-called insolid sets (sets
with the property that every proper subset has a larger in-degree)
satisfies the Helly-property, and thus can be (efficiently)
represented as a subtree hypergraph.  We also give an algorithm for
the case when only $c$ is uniform but $w$ is not. This algorithm is
only polynomial if $k$ is not part of the input.
\end{quote}

\begin{quote}
{\bf Keywords: arborescences, polynomial algorithm, covering}
\end{quote}
\vspace{5mm}



\section{Introduction}

Let $D=(V,A)$ be a digraph with vertex set $V$ and arc set $A$. A
\textbf{spanning arborescence} is a subset $B\subseteq A$ that is a
spanning tree in the undirected sense, and every node has in-degree at
most one. Thus there is exactly one node, the \textbf{root node}, with
in-degree zero.  Equivalently, a spanning arborescence is a subset
$B\subseteq A$ with the property that there is a root node $r\in V$
such that $\varrho_B(r)=0$, and $\varrho_B(v)=1$ for $v\in V-r$, and
$B$ contains no cycle. We will also call a spanning arborescence as an 
\textbf{arborescence} for short, when the set of nodes is obvious from 
context. If $r\in V$ is the root of the spanning arborescence $B$ then 
$B$ is said to be an \textbf{$r$-arborescence}.

Given also a positive integer $k$, a subset $B\subseteq A$ is called a
\textbf{\karb}, if it is the arc-disjoint union of $k$ spanning
arborescences. 
In the special case when every
arborescence has the same root $r$, we call $B$ a \textbf{\krarb}.

Given $D=(V,A)$, $k$ and a cost function $c:A\to \Rset$, it is well
known how to find a \textbf{minimum cost \krarb} in polynomial time, and to find a \textbf{minimum cost \karb} just as well. 
See \cite{schrijver}, Chapter 53.8 for a reference, where several
related problems are considered. The existence of a
$k$-union-$r$-arborescence is characterized by Edmonds' Disjoint
Arborescence Theorem (Theorem \ref{edmodisj}), 
while the existence of a $k$-union-arborescence is characterized by a
theorem of Frank \cite{frank1978disjoint} (see Theorem \ref{thm:Frank}
below). Frank also gave a linear programming description of the convex
hull of $k$-union-arborescences, generalizing Edmonds' linear
programming description of the convex hull of
$k$-union-$r$-arborescences. The problem of finding a minimum cost
$k$-union-arborescence may also be solved with the use of these
results, either via a reduction to minimum cost
$k$-union-$r$-arborescences, or minimum weight matroid intersection.
  
In this paper we consider the following covering problems, which are polynomial time equivalent. 

\begin{prob}[\textbf{Blocking optimal \karb s}]\label{prob:1}
Given a digraph $D=(V,A)$, a positive integer $k$, a cost
function $c:A\to \Rset$ and a nonnegative weight function $w:A\to
\Rset_+$, find a subset $H$ of the arc set such that $H$ intersects every
minimum $c$-cost \karb, and $w(H)$ is minimum.
\end{prob}

Here the expression "intersects" simply means that the two have nonempty intersection. 
We remark that Problem \ref{prob:1} is polynomially equivalent with the version 
where the root is also given in advance, that is, the problem of blocking optimal
\krarb s. 

\begin{prob}[\textbf{Blocking optimal \krarb s}]\label{prob:2}
Given a digraph $D=(V,A)$, a node $r\in V$, a positive integer $k$, a cost
function $c:A\to \Rset$ and a nonnegative weight function $w:A\to
\Rset_+$, find a subset $H$ of the arc set such that $H$ intersects every
minimum $c$-cost \krarb, and $w(H)$ is minimum.
\end{prob}

In section \ref{sec:variants} we will show that the two problems are polynomial time equivalent. 

In our previous paper \cite{mincostarb} we have solved  Problem \ref{prob:1} and Problem \ref{prob:2} in the special
case  when  $k=1$. Our conjecture is that
the problem is also polynomial time solvable when $k$ is not
fixed. The main result of this paper is that the problem is polynomial
time solvable when $k$ is part of the input, and both $c$ and $w$ are
set to be constant -- that is, when $H$ is required to intersect
\emph{every} \karb\ and we want to minimize $|H|$. We also give
algorithms for Problems \ref{prob:1} and \ref{prob:2} when $c$ is constant but $w$ is not, but these algorithms are only polynomial if $k$ is not part of the input.

We remark that the version of Problem \ref{prob:1} and Problem
\ref{prob:2} where we set $c$ to be constant are \emph{not} easily
seen to be equivalent: at least we did not find such reductions (find more details in Section \ref{sec:gen}).
Thus it is
important to note that in the sequel we will consider the version of
Problem \ref{prob:1} (and not Problem \ref{prob:2} !) with constant
cost function $c$.


The rest of the paper is organized as follows. In
Section \ref{sec:not} we introduce some notation that we will use. In
Section \ref{sec:variants} we show that Problems \ref{prob:1}
and \ref{prob:2} are polynomial-time equivalent. In
Section \ref{sec:gen} we provide general observations on the uniform
cost version of Problems \ref{prob:1} and \ref{prob:2}.  In
Section \ref{sec:card} we give a polynomial algorithm solving
Problem \ref{prob:2} in the case when both $c$ and $w$ are
uniform. Finally, in Section \ref{sec:weight} we deal with the uniform
cost general weight versions: in Section \ref{sec:krarb} we give an
algorithm for the weighted blocking of \krarb s, while in
Section \ref{sec:karb} we give an algorithm for the weighted blocking
of \karb s. These algorithms are only polynomial if $k$ is not part of
the input.

\section{Notation}\label{sec:not}

Let us overview some of the notation and definitions used in the
paper.  The arc set of the digraph $D$ will also be denoted by $A(D)$.
Given a digraph $D=(V,A)$ and a node set $Z\subseteq V$, let $D[Z]$ be
the digraph obtained from $D$ by deleting the nodes of $V-Z$ (and all
the arcs incident with them). If $B\subseteq A$ is a subset of the arc
set, then we will identify $B$ and the subgraph $(V,B)$. Thus $B[Z]$
is obtained from $(V,B)$ by deleting the nodes of $V-Z$ (and the arcs
of $B$ incident with them). The set of arcs of $D$ entering $Z$ is
denoted $\delta_D^{in}(Z)$, the number of these arcs is
$\varrho_D(Z)=|\delta_D^{in}(Z)|$.  An \textbf{arc-weighted digraph}
is a triple $D_w=(V,A,w)$ where $(V,A)$ is a digraph and
$w:A\to \Rset_+$ is a weight function. For an arc-weighted digraph
$D_w=(V,A,w)$ and subset $X$ of its node set we let
$\varrho_{D_w}(X)=\sum\{w_a: a $ enters $X\}$ denote the weighted
indegree.  A \textbf{subpartition} of a subset $X$ of $V$ is a
collection of pairwise disjoint non-empty subsets of $X$: note that
$\emptyset$ cannot be a member of a subpartition, but $\emptyset$ is a
valid subpartition, having no members at all. For a vector
$x:A\to \Rset$ and subset $E\subseteq A$ we let $x(E)=\sum_{a\in
E}x_a$.

\section{Equivalence of versions}\label{sec:variants}

\begin{thm}\label{thm:versions}
The following problems are polynomially equivalent (where $D_i=(V_i,A_i)$ is a
digraph, $k$ a positive integer, $c_i:A_i\to \Rset$ and 
$w_i:A_i\to \Rset_+$ for $i=1,2$). 
\begin{enumerate}
\item \textbf{Blocking optimal \karb s (Problem \ref{prob:1})}: Given $D_1$, $k$, $c_1$, $w_1$, find $H\subseteq A_1$ so that
  $H$ intersects every minimum $c_1$-cost \karb\ in $D_1$, and
  $w_1(H)$ is minimum.
\item \textbf{Blocking optimal \krarb s (Problem \ref{prob:2})}: Given $D_2$, $k$, $c_2$, $w_2$ and a
  node $r\in V_2$, find $H\subseteq A_2$ so that $H$ intersects every
  minimum $c_2$-cost \krarb\ in $D_2$, and $w_2(H)$ is minimum.
\end{enumerate}
\end{thm}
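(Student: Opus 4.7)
The plan is to give polynomial-time reductions in both directions.

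First I would reduce Problem 2 to Problem 1. Given an instance $(D_2, r, k, c_2, w_2)$, let $D_1$ be the digraph obtained from $D_2$ by deleting every arc entering $r$, and restrict $c_2$ and $w_2$ to the surviving arcs. Because no arc enters $r$ in $D_1$, every sub-arborescence of any $k$-union-arborescence of $D_1$ is forced to have root $r$; conversely every $k$-union-$r$-arborescence of $D_2$ avoids arcs entering $r$, so it lies in $D_1$. Thus the families of $k$-union-arborescences of $D_1$ and of $k$-union-$r$-arborescences of $D_2$ coincide, with identical costs. Since $w_2\ge 0$ and no deleted arc hits any $k$-union-$r$-arborescence, there is always an optimal blocker that avoids the deleted arcs, so optimum values and optimal blockers match in the two instances.

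For the other direction, given $(D_1, k, c_1, w_1)$, I would form $D_2$ by adding a new node $r$ together with $k$ parallel arcs from $r$ to each $v\in V_1$. On the arcs of $A_1$ keep $c_1$ and $w_1$; on the new arcs put cost $M$ and weight $W$, where $M$ is chosen larger than $|A_1|\cdot\max_a |c_1(a)|+1$ and $W$ is chosen larger than $w_1(A_1)$. Every $k$-union-$r$-arborescence of $D_2$ uses at least $k$ new arcs, one per sub-arborescence rooted at $r$; the choice of $M$ forces every $c_2$-minimum $k$-union-$r$-arborescence to use exactly $k$ such arcs. Removing those $k$ arcs from a $c_2$-minimum $k$-union-$r$-arborescence of $D_2$ then leaves a $c_1$-minimum $k$-union-arborescence of $D_1$; conversely, every $k$-union-arborescence of $D_1$ extends to one in $D_2$ by attaching one new arc to the root of each sub-arborescence (using parallel copies if two sub-arborescences share a root). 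The correspondence preserves cost up to the additive constant $kM$ and preserves whether a given arc subset is hit. Since $W>w_1(A_1)$, no optimum blocker includes a new arc, so optimum blockers of the two instances coincide as subsets of $A_1$ with equal weights.

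The main point requiring care is verifying that a polynomial choice of $M$ indeed forces any $c_2$-minimum $k$-union-$r$-arborescence of $D_2$ to use exactly $k$ new arcs rather than more. Bounding the $c_1$-cost of any $A_1$-subset of a $k$-union-$r$-arborescence by $|A_1|\cdot\max_a|c_1(a)|$ provides the needed threshold on $M$, after which the strict cost penalty of an extra $(r,v)$ arc beats any possible saving on $A_1$. The remaining bookkeeping -- that the bijection is cost- and blocker-preserving, and that the reductions compute in polynomial time -- is routine.
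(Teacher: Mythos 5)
Your proof is correct and follows essentially the same route as the paper's: one direction deletes the arcs entering $r$, and the other adds a new root $r$ joined to every node by arcs of prohibitively large cost and weight, so that minimum-cost $k$-union-$r$-arborescences use exactly $k$ new arcs and optimal blockers avoid them. Your use of $k$ parallel arcs $rv$ (where the paper adds only one arc per node) is a sensible refinement that also handles $k$-union-arborescences in which several of the $k$ arborescences share a root.
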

\begin{proof}
Problem \ref{prob:2} reduces to Probem \ref{prob:1} by deleting all arcs entering node $r$ from the input digraph. For the reverse reduction, consider an instance $D_1,k,c_1,w_1$ of the Problem \ref{prob:1}, and define an instance of Problem \ref{prob:2} as follows. Let $V_2=V_1+r$ with a new node $r$, and let $D_2=(V_2, A_1\cup \{rv: v\in V_1\})$. Let the costs defined
as $c_2(a)=c_1(a)$ for every $a\in A_1$ and $c_2(rv)=C$ for every new
arc $rv\, (v\in V_1)$ where $C=\sum_{a\in A_1}c_1(a)+1$. Finally, the
weights are defined as follows: $w_2(a)=w_1(a)$ for every $a\in A_1$
and $w_2(rv)=W$ for every new arc $rv\, (v\in V_1)$ where $W=\sum_{a\in
  A_1}w_1(a)+1$. By these choices, in the instance to Problem \ref{prob:2}
given by $D_2,k,c_2,w_2$ and $r$, the minimum $c_2$-cost \krarb s
naturally correspond to minimum $c_1$-cost $k$-arborescences in $D_1$
(since they will use exactly $k$ arcs leaving $r$), and for blocking
these we will not use the new arcs because of their large weight. 
\end{proof}

\section{General observations on the uniform cost case}\label{sec:gen}

In this section we consider Problem \ref{prob:1} and Problem \ref{prob:2} in the case when
 $c$ is uniform. For sake of clarity, we explicitly restate these problems.

\begin{prob}[\textbf{Blocking  \karb s}]\label{prob:3}
Given a digraph $D=(V,A)$, a positive integer $k$, and a nonnegative weight function $w:A\to
\Rset_+$, find a subset $H$ of the arc set such that $H$ intersects every
 \karb, and $w(H)$ is minimum.
\end{prob}

\begin{prob}[\textbf{Blocking  \krarb s}]\label{prob:4}
Given a digraph $D=(V,A)$, a node $r\in V$, a positive integer $k$, and a nonnegative weight function $w:A\to
\Rset_+$, find a subset $H$ of the arc set such that $H$ intersects every
 \krarb, and $w(H)$ is minimum.
\end{prob}

Note that the reduction given in Section \ref{sec:variants} shows that
Problem \ref{prob:4} is polynomial time reducible to Problem
\ref{prob:3}. On the other hand we did not find a reduction in the
other direction: Problem \ref{prob:4} seems to be easier than Problem \ref{prob:3}.

We first recall some definitions and results to be used later. First
we recall the fundamental result of Edmonds' characterizing the
existence of a \krarb\ in a digraph.

\begin{thm}[Edmonds' Disjoint Arborescence Theorem, \cite{edmondsdisjoint}]
\label{edmodisj}
Given a digraph $D=(V,A)$, a node $r\in V$ and a positive integer $k$,
there exists a
\krarb\ in $D$ if and only if $\varrho_D(X)\ge k$
 for every non-empty $X\subseteq V-r$.
\end{thm}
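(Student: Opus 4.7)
The plan is to prove necessity directly and handle sufficiency by induction on $k$. For necessity, writing a $k$-union-$r$-arborescence as $B = B_1 \sqcup \cdots \sqcup B_k$, each spanning $r$-arborescence $B_i$ must contain at least one arc entering any non-empty $X \subseteq V - r$ (otherwise some vertex of $X$ would be unreachable from $r$ inside $B_i$), yielding $\varrho_D(X) \geq k$.

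For sufficiency I would induct on $k$, the base case $k = 0$ being trivial. The heart of the induction is the following key lemma: if $\varrho_D(X) \geq k \geq 1$ for every non-empty $X \subseteq V - r$, then there exists a single spanning $r$-arborescence $F \subseteq A$ such that $\varrho_{D-F}(X) \geq k - 1$ for every non-empty $X \subseteq V - r$. Granting the lemma, applying the induction hypothesis to $D - F$ produces a $(k-1)$-union-$r$-arborescence disjoint from $F$, and its union with $F$ is the desired $k$-union-$r$-arborescence.

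My plan for the lemma is Lov\'asz's maximality argument. Take $F \subseteq A$ to be a \emph{maximal} subarborescence rooted at $r$, spanning some $S \ni r$, that preserves $\varrho_{D-F}(X) \geq k-1$ for all non-empty $X \subseteq V-r$; such an $F$ exists because $F = \emptyset$ satisfies both conditions. I would aim to show $S = V$; assuming not, let $U = V \setminus S$ and call a non-empty $X \subseteq V-r$ \emph{tight} if $\varrho_{D-F}(X) = k-1$. Since no arc of $F$ enters $U$, we have $\varrho_{D-F}(X) = \varrho_D(X) \geq k$ for every non-empty $X \subseteq U$, so no tight set lies inside $U$. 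Adding an arc $uv \in A - F$ with $u \in S$, $v \in U$ preserves the property precisely when $uv$ enters no tight set, so the remaining task is to produce such a ``safe'' arc.

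The main obstacle is the existence of the safe arc, which I would handle by uncrossing. If no tight set intersects $U$, any of the at least $\varrho_D(U) \geq k$ arcs from $S$ to $U$ in $D-F$ works. Otherwise, let $T$ be a \emph{minimal} tight set intersecting $U$; since $T \not\subseteq U$ we have $T \cap S \neq \emptyset$. Counting arcs of $D-F$ entering $T \cap U$ gives at least $\varrho_{D-F}(T \cap U) \geq k$ in total, while at most $\varrho_{D-F}(T) = k-1$ of them enter from outside $T$, so at least one arc $uv$ goes from $T \cap S$ to $T \cap U$. I would then show this arc is safe: for any tight $Y \ni v$, submodularity yields $\varrho_{D-F}(Y \cap T) + \varrho_{D-F}(Y \cup T) \leq \varrho_{D-F}(Y) + \varrho_{D-F}(T) = 2(k-1)$ with each term $\geq k-1$, forcing $Y \cap T$ to be tight; since $v \in (Y \cap T) \cap U$, the minimality of $T$ among tight sets meeting $U$ forces $Y \cap T = T$, whence $u \in T \subseteq Y$. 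This contradicts the maximality of $F$ and completes the proof.
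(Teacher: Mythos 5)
Your proof is correct and complete. Note, however, that the paper does not prove this statement at all: Theorem \ref{edmodisj} is quoted as a known result with a citation to Edmonds, and is used as a black box (e.g.\ in Section \ref{sec:krarb} to reduce the blocking problem to guessing $k-1$ surviving arcs of a cut). What you have written is the classical Lov\'asz argument for Edmonds' theorem: peel off one spanning $r$-arborescence $F$ while preserving the cut condition with $k$ replaced by $k-1$, grow $F$ greedily, and use submodularity of the in-degree function to show that a minimal tight set $T$ meeting the unreached part $U$ supplies a safe arc from $T\cap S$ to $T\cap U$. All the delicate points check out: arcs of $F$ have both endpoints in $S$, so $\varrho_{D-F}(X)=\varrho_D(X)\ge k$ for non-empty $X\subseteq U$ and no tight set lies inside $U$; the arc entering $T\cap U$ from $T\cap S$ exists because at most $\varrho_{D-F}(T)=k-1$ of the at least $k$ arcs entering $T\cap U$ come from outside $T$; and the uncrossing step legitimately applies submodularity because $Y\cap T$ and $Y\cup T$ are both non-empty subsets of $V-r$, forcing $Y\cap T$ tight and hence, by minimality, $T\subseteq Y$ and $u\in Y$, a contradiction. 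Your choice of $k=0$ as the induction base is harmless. In short, you have supplied a valid self-contained proof of a result the paper only cites.
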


On the other hand the existence of a \krarb\ in a digraph is
characterized by the following result due to Frank. 

\begin{thm}[Frank, \cite{frank1979covering,frank1978disjoint}]\label{thm:Frank}
Given a digraph $D=(V,A)$ and a positive integer $k$, there exists a
\karb\ in $D$ if and only if $\sum_{X\in \cX}\varrho_D(X)\ge k(|\cX|-1)
$ for every subpartition $\cX$ of $V$.
\end{thm}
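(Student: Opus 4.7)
The plan is to prove the easy necessity by counting entering arcs in a hypothetical decomposition, and to prove sufficiency by reducing to Edmonds' Disjoint Arborescence Theorem (Theorem \ref{edmodisj}) via a judicious choice of a root multiset.

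For necessity, decompose a $k$-union-arborescence as $B = B_1 \cup \ldots \cup B_k$ with $B_j$ an $r_j$-arborescence. For any subpartition $\cX = \{X_1, \ldots, X_t\}$ of $V$, each spanning arborescence $B_j$ satisfies $\varrho_{B_j}(X_i) \ge 1$ whenever $r_j \notin X_i$; since the $X_i$ are pairwise disjoint, $r_j$ lies in at most one of them, so $\sum_i \varrho_{B_j}(X_i) \ge t-1$. Summing over $j$ and using $\varrho_D \ge \varrho_B$ yields $\sum_i \varrho_D(X_i) \ge k(t-1)$.

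For sufficiency I would first establish a multi-root version of Edmonds' theorem: given a multiset $R$ of vertices of total cardinality $k$, extend $D$ to $D^+$ by adding a new vertex $s$ and, for each $v \in V$, as many parallel arcs $sv$ as the multiplicity of $v$ in $R$. Since the total number of arcs leaving $s$ in $D^+$ is exactly $k$, any family of $k$ arc-disjoint $s$-arborescences in $D^+$ is forced to use these arcs one per arborescence, and deleting $s$ recovers $k$ arc-disjoint spanning arborescences in $D$ whose root multiset is precisely $R$. Edmonds' theorem applied to $D^+$ then states that such a packing exists iff $\varrho_D(X) + |R \cap X| \ge k$ for every non-empty $X \subseteq V$ (counting multiplicities in $|R \cap X|$). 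Consequently, a $k$-union-arborescence exists in $D$ iff there is a multiset $R \subseteq V$ of size $k$ with $|R \cap X| \ge f(X) := \max(0,\, k - \varrho_D(X))$ for every non-empty $X \subseteq V$.

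The remaining — and most substantial — step is to show that the hypothesis produces such an $R$. By LP duality, or equivalently a Hall-type covering argument, combined with an uncrossing step that exploits the submodularity of $\varrho_D$, the existence of $R$ of size $k$ meeting these lower-bound constraints is equivalent to $\sum_{X \in \cX} f(X) \le k$ for every subpartition $\cX$ of $V$. Sets with $\varrho_D(X) \ge k$ contribute $0$ to this sum, so one may restrict attention to subpartitions in which every member has $\varrho_D(X) < k$, in which case the condition becomes $\sum_{X \in \cX}(k - \varrho_D(X)) \le k$, which rearranges to exactly the hypothesis $\sum_{X \in \cX} \varrho_D(X) \ge k(|\cX|-1)$. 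I expect the uncrossing in this last covering lemma to be the most delicate point, since the ambient supermodularity of $k - \varrho_D(\cdot)$ only holds on the family where this function is positive, and care is required where $f$ vanishes.
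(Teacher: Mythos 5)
The paper itself offers no proof of this theorem --- it is quoted from Frank with a citation --- so there is no in-paper argument to measure you against; what you have written is essentially the classical proof, and it is sound in outline. Your necessity count is complete and correct. The sufficiency reduction is also correct: adding a root $s$ with $k$ arcs distributed according to a multiset $R$, applying Theorem \ref{edmodisj} to the extended digraph, and noting that the $k$ arcs leaving $s$ must be used one per arborescence correctly converts the problem into finding a nonnegative integer vector $z$ on $V$ with $z(V)=k$ and $z(X)\ge p(X):=\max(0,k-\varrho_D(X))$ for every nonempty $X$ (and since $p(V)=k$, any feasible $z$ automatically has $z(V)\ge k$, so asking for size exactly $k$ costs nothing). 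The one step you assert rather than prove --- that such a $z$ exists iff $\sum_{X\in \cX}p(X)\le k$ for every subpartition $\cX$, which you correctly translate back into the stated hypothesis --- is itself a genuine theorem (Frank's covering theorem for intersecting supermodular functions): $p$ is intersecting supermodular because $\varrho_D$ is submodular and the truncation at $0$ only fails supermodularity on disjoint pairs, exactly the subtlety you flag. A self-contained write-up would still need that uncrossing argument in full, e.g.\ by showing that the truncation $\hat p(X)=\max\{\sum_{i}p(X_i):\{X_i\}\mbox{ a subpartition of }X\}$ is fully supermodular and that a minimal integer vector covering a fully supermodular function has total size $\hat p(V)$; but you have identified the right lemma, stated it correctly, and located the only delicate point, so the proposal stands as a correct (standard) proof modulo that known lemma.
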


Note that the condition in the theorem above always (trivially) holds
for subpartitions having only one member, thus we may only need to
check for subpartitions having at least two members. Furthermore, we
may also narrow down to subpartitions in which every member is an
insolid set of the digraph - the notion of insolid sets is defined as
follows.

\begin{defn}
Given a digraph $D=(V,A)$, a non-empty subset of nodes $X\subseteq V$
is called \emph{in-solid}, if $\varrho(Y)>\varrho(X)$ holds for
every nonempty $Y\subsetneq X$.
\end{defn}

A simple but useful corollary of the definition is the following fact.

\begin{cl}\label{claim4}
Given a digraph $D=(V,A)$ and an arbitrary non-empty subset
$X\subseteq V$, there exists an in-solid set $X'\subseteq X$ such
that $\varrho_D(X')\le \varrho_D(X)$.
\end{cl}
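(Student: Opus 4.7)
The statement is an existential claim about finding an in-solid subset $X' \subseteq X$ whose in-degree is at most that of $X$. The plan is to produce $X'$ by a two-level minimization: first minimize $\varrho_D$ over nonempty subsets of $X$, then break ties by taking an inclusion-minimal optimizer.

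Concretely, I would define $X'$ to be a nonempty subset of $X$ that minimizes $\varrho_D(\cdot)$ among all such nonempty subsets, and, among those minimizers, is minimal with respect to inclusion. Both selections exist because $V$ is finite and the family of nonempty subsets of $X$ is nonempty (it contains $X$ itself). The inequality $\varrho_D(X') \le \varrho_D(X)$ is then immediate, since $X$ itself is a feasible candidate for the first minimization.

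It remains to verify that $X'$ is in-solid. Let $Y$ be any nonempty proper subset of $X'$. Since $X' \subseteq X$, also $Y \subseteq X$, so $Y$ is a candidate in the first minimization and hence $\varrho_D(Y) \ge \varrho_D(X')$. If equality held, then $Y$ would itself be a nonempty subset of $X$ attaining the minimum value of $\varrho_D$, contradicting the inclusion-minimality of $X'$ among such minimizers. Therefore $\varrho_D(Y) > \varrho_D(X')$, which is exactly the defining condition for $X'$ being in-solid.

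There is no real obstacle here: the only subtlety is that simply taking a nonempty subset of minimum in-degree is not quite enough, because in-solidity demands a \emph{strict} inequality on proper subsets. The inclusion-minimality tie-break is precisely what upgrades the weak inequality from the minimum choice to the required strict one, and this is what the proof should highlight.
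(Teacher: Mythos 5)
Your proof is correct; the paper states Claim~\ref{claim4} without proof (calling it ``a simple but useful corollary of the definition''), and your argument --- minimizing $\varrho_D$ over the nonempty subsets of $X$ and then taking an inclusion-minimal set among the minimizers --- is exactly the standard argument the authors have in mind. The tie-breaking step you highlight, which upgrades the weak inequality to the strict one required by in-solidity, is indeed the only point of substance and you handle it correctly.
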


Note that the insolid set $X'$ in this claim can be found in
polynomial time (given $D$ and $X$), but we will not rely on this fact
here.  Claim \ref{claim4} implies that Frank's theorem can be formulated with
insolid sets. We say that a subpartition \cX\ is an \textbf{insolid
subpartition} if every member of \cX\ is insolid.

\begin{thm}[Equivalent form of Theorem \ref{thm:Frank}]\label{thm:eqFrank}
Given a digraph $D=(V,A)$ and a positive integer $k$, there exists a
\karb\ in $D$ if and only if $\sum_{X\in \cX}\varrho_D(X)\ge k(|\cX|-1)
$ for every insolid subpartition \cX\ of $V$ with $|\cX|\ge 2$.
\end{thm}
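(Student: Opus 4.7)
The plan is to derive this equivalent form from Frank's theorem (Theorem \ref{thm:Frank}) by a short reduction, using Claim \ref{claim4} as the main tool. One direction is automatic: if Frank's condition holds for every subpartition of $V$, then it holds in particular for every insolid subpartition with at least two members, and it holds trivially for subpartitions with at most one member (as already observed in the paragraph following Theorem \ref{thm:Frank}).

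For the nontrivial direction, I would assume that $\sum_{X\in \cX}\varrho_D(X)\ge k(|\cX|-1)$ holds for every insolid subpartition with $|\cX|\ge 2$, and show that Frank's condition then holds for every subpartition $\cY=\{Y_1,\dots,Y_t\}$ of $V$. The case $t\le 1$ is trivial, so assume $t\ge 2$. For each $i$, apply Claim \ref{claim4} to obtain an insolid set $Y_i'\subseteq Y_i$ with $\varrho_D(Y_i')\le \varrho_D(Y_i)$.

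The key point, and really the only thing to check, is that $\cY'=\{Y_1',\dots,Y_t'\}$ is again a subpartition with exactly $t$ members: the $Y_i'$ are non-empty because insolid sets are non-empty by definition, and they are pairwise disjoint because $Y_i'\subseteq Y_i$ and the $Y_i$ are pairwise disjoint. Hence $\cY'$ is an insolid subpartition with $|\cY'|=t\ge 2$, so by hypothesis
\[
\sum_{i=1}^t \varrho_D(Y_i') \;\ge\; k(t-1).
\]
Combining this with $\varrho_D(Y_i')\le \varrho_D(Y_i)$ for each $i$ gives $\sum_{i=1}^t \varrho_D(Y_i) \ge k(t-1) = k(|\cY|-1)$, which is Frank's condition for $\cY$. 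Thus the hypothesis of Theorem \ref{thm:Frank} holds, so a \karb{} exists.

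There is essentially no real obstacle here; the statement is a direct corollary of Frank's theorem together with Claim \ref{claim4}. The only thing one must be a little careful about is not collapsing the number of members of the subpartition when replacing each $Y_i$ by a subset: this is what makes the bound $k(|\cX|-1)$ transfer, and it is guaranteed by the non-emptiness of insolid sets.
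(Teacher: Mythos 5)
Your argument is correct and is exactly the reduction the paper has in mind: the paper gives no explicit proof, merely remarking that Claim \ref{claim4} implies Frank's theorem can be restated with insolid sets, and your replacement of each member $Y_i$ by an insolid $Y_i'\subseteq Y_i$ with $\varrho_D(Y_i')\le\varrho_D(Y_i)$ (preserving disjointness, non-emptiness, and hence the member count) is precisely that implication spelled out.
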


\section{The cardinality case}\label{sec:card}

In this section we solve Problem \ref{prob:3} in the case when
$w\equiv 1$ is uniform, too. Note that the solution of Problem
\ref{prob:4} when $w\equiv 1$ is easy: by Edmonds' Disjoint
Arborescence Theorem the task is to remove all but $k-1$ arcs of a
minimum cut. 

For sake of clarity we state the problem explicitly.

\begin{prob}\label{prob:5}
Given a digraph $D=(V,A)$ and a positive integer $k$, find a minimum
size subset $H$ of the arc set such that $D-H$ does not contain a
\karb.
\end{prob}

By Theorem \ref{thm:Frank}, our Problem \ref{prob:5} reduces to finding a smallest cardinality
subset of arcs which, when removed, will create a violating
subpartition. A subpartition $\cX$ becomes a violating subpartition if
we remove at least $\sum_{X\in \cX}\varrho_D(X)- k(|\cX|-1) +1$ arcs
from $\cup\{\delta^{in}(X): X\in \cX\}$. Note that this is only possible if
$\sum_{X\in \cX}\varrho_D(X)- k(|\cX|-1) +1\le \sum_{X\in
  \cX}\varrho_D(X)$, which is equivalent to $|\cX|\ge 2$. Furthermore, by Theorem \ref{thm:eqFrank}, we
can narrow down to insolid  subpartitions.  Therefore, Problem \ref{prob:5} is equivalent to the following problem.

\begin{prob}\label{prob:maxsub2}
Given a digraph $D=(V,A)$ and a positive integer $k$, find an insolid
subpartition \cX\ with $|\cX|\ge 2$ maximizing $\sum_{X\in
  \cX}(k-\varrho_D(X))$.
\end{prob}

We solve this problem in two steps: first we show how to solve it
without the requirement on the size of the subpartition, and then we
show how to force that requirement. Note that a subpartition \cX\ 
maximizing $\sum_{X\in \cX}(k-\varrho_D(X))$ does not automatically
satisfy the requirement $|\cX|\ge 2$: for example in a
$k$-arc-connected digraph $\cX=\{V\}$ is an optimal subpartition.
Note that the problem without the size requirement is a maximum weight
matching problem in the hypergraph of insolid sets, but with the special
weight function $k-\varrho$.

\subsection{Finding an optimal subpartition of unconstrained size}

In this section we solve the variant of Problem \ref{prob:maxsub2} that 
comes without the requirement on the size of the subpartition. In fact we will 
need the solution of a little more general problem, namely the following one.

\begin{prob}\label{prob:maxsub}
Given a  digraph $D=(V,A)$, a nonempty subset $V'\subseteq
V$ and a positive integer $k$, find a subpartition \cX \ of $V'$
maximizing $\sum_{X\in \cX}(k-\varrho_{D}(X))$.
\end{prob}

Note that we could restrict ourselves to insolid subpartitions in the
problem. 
The
following simple observation will be useful later.

\begin{cl}\label{cl:optempty}
Given an instance to Problem \ref{prob:maxsub}, the subpartition
$\cX=\emptyset$ is an optimal solution if and only if $\varrho(X)\ge
k$ for every nonempty $X\subseteq V'$.
\end{cl}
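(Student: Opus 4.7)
The plan is essentially to observe that the objective $\sum_{X\in \cX}(k-\varrho_D(X))$ evaluates to $0$ on the empty subpartition $\cX=\emptyset$ (an empty sum), so $\cX=\emptyset$ is optimal precisely when no other subpartition achieves a positive value.

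For the \emph{if} direction, assume $\varrho(X)\ge k$ for every nonempty $X\subseteq V'$. Then for any subpartition $\cX$ of $V'$, each term $k-\varrho_D(X)$ is nonpositive, so $\sum_{X\in\cX}(k-\varrho_D(X))\le 0$, which equals the value attained by $\cX=\emptyset$. Hence $\emptyset$ is optimal. For the \emph{only if} direction, I would argue by contrapositive: if there exists a nonempty $X\subseteq V'$ with $\varrho_D(X)<k$, then the one-member subpartition $\cX=\{X\}$ has objective value $k-\varrho_D(X)>0$, strictly exceeding the value $0$ achieved by $\emptyset$, so $\emptyset$ is not optimal.

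There is no real obstacle here; the claim is an immediate consequence of unpacking the definitions of the objective function and of a subpartition (in particular, that $\emptyset$ is a valid subpartition with no members, as emphasized in Section~\ref{sec:not}).
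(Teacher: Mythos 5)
Your proof is correct. The paper states this claim without proof (it is introduced only as a ``simple observation''), and your argument --- noting that $\cX=\emptyset$ has objective value $0$, that the hypothesis forces every term $k-\varrho_D(X)$ to be nonpositive, and that a single violating set $X$ yields the strictly better subpartition $\{X\}$ --- is exactly the intended elementary justification.
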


\newcommand{\bestsub}{\ensuremath{BestSubpart}}
\newcommand{\bestconsub}{\ensuremath{BestConstrSubpart}}

For an arbitrary  digraph $D=(V,A)$, nonempty subset
$V'\subseteq V$ and positive integer $k$, let $\bestsub(V')$ denote an
optimum solution of our unconstrained Problem \ref{prob:maxsub} (that
is, a maximizer of $\max\{\sum_{X\in \cX}(k-\varrho(X)): \cX$ is a
subpartition of $V'\}$). Note that $\bestsub(V)$ always has at least
one member (since the subpartition $\{V\}$ is better than the empty
subpartition).

Fortunately, the family of insolid sets has a nice structure: as
observed in \cite{bbf}, the family of insolid sets forms a subtree
hypergraph, defined as follows. (Actually, the authors of \cite{bbf}
proved that the family of solid sets - the family of all insolid or
outsolid sets - forms a subtree hypergraph, and the subtree
representation can be found in polynomial time. Here we only need the
property for the family of insolid sets.)

\begin{defn}
A hypergraph $H=(V, \cE)$ is called a \textbf{subtree hypergraph} if
there exists a tree $T$ spanning the node set $V$ such that every
hyperedge in \cE\ induces a subtree of $T$. The tree $T$ is called a
{\bf basic tree} (or representative tree) for the hypergraph $H$.
\end{defn}


B\'ar\'asz, Becker and Frank proved that a representative tree for insolid sets exists, and can be found in polynomial time. The subtree hypergraph property and its polynomial time construction is quite surprising, given the fact that the number of insolid sets might be exponential. 

\newcommand{\cFin}{\ensuremath{\mathcal{F}_{in}}}

\begin{thm}[B\'ar\'asz, Becker, Frank \cite{bbf}]
The family $\cFin=\cFin(D)$ of in-solid sets of a digraph $D=(V,A)$ is a subtree hypergraph. The representative tree can be found in
polynomial time in the size of the digraph.
\end{thm}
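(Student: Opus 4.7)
The plan is to establish that the family $\cFin$ of in-solid sets satisfies two classical properties that together force a subtree representation: the Helly property, and chordality of the intersection graph. Once these are in hand, the theorem of Gavril/Buneman yields a basic tree on $V$. The efficient construction of this tree is a separate issue, which I address at the end.

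The key technical lemma I would prove first is: if $X,Y \in \cFin$ intersect but neither contains the other, then submodularity of $\varrho$ together with in-solidity forces the strict inequality
\[
\varrho(X\cup Y) \;<\; \min(\varrho(X),\varrho(Y)).
\]
Indeed, $X\cap Y$ is a non-empty proper subset of both $X$ and $Y$, so by in-solidity $\varrho(X\cap Y) > \max(\varrho(X),\varrho(Y))$, and combining this with $\varrho(X)+\varrho(Y)\ge \varrho(X\cup Y)+\varrho(X\cap Y)$ gives the bound. This inequality is the main structural fact I expect to use repeatedly.

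Next, I would use the lemma to establish the Helly property by induction on the number of sets: for three pairwise intersecting in-solid sets, analyse the possible containment pattern; when two are nested the argument reduces to two sets, and when all three are pairwise incomparable, the lemma forces repeated drops in $\varrho$ on unions which, together with in-solidity applied to the pairwise intersections (each of which must contain an in-solid subset, by Claim \ref{claim4}), yields a common element. The same case analysis also produces a chord in any putative induced cycle of length $\ge 4$ in the intersection graph, establishing chordality. At this point Gavril/Buneman gives a tree $T$ on $V$ such that every in-solid set induces a subtree; some care is needed to arrange that $T$ is a \emph{spanning} tree, which can be done by subdividing or by adding any in-solid sets of the form $\{v\}$ that force the base vertex set.

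For the polynomial construction, the main obstacle is that $|\cFin|$ may be exponential, so $T$ must be produced without enumerating $\cFin$. The natural strategy is to compute, for each pair $u,v\in V$, a canonical in-solid set separating them (or, for each $v$, a minimum-$\varrho$ in-solid set containing $v$) using $O(|V|^2)$ minimum-cut computations, and then assemble $T$ in Gomory--Hu style from these canonical sets. The delicate step is to prove that the tree built from only this polynomial-size subfamily in fact represents \emph{every} in-solid set — this relies on the structural lemma above, which forces any in-solid set not in the computed subfamily to be squeezed between two canonical ones and hence still inherit the subtree property. I expect this verification, rather than the Helly/chordality part, to be the main obstacle.
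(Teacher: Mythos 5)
The paper itself contains no proof of this statement: it is imported verbatim from the cited source \cite{bbf} and used as a black box, so your proposal can only be judged against the original argument, not against anything in this text. Your opening lemma is correct and correctly proved, and it is indeed the engine of the B\'ar\'asz--Becker--Frank development: for properly intersecting in-solid $X,Y$, in-solidity gives $\varrho(X\cap Y)>\max(\varrho(X),\varrho(Y))$, and submodularity of $\varrho$ then forces $\varrho(X\cup Y)<\min(\varrho(X),\varrho(Y))$.

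Beyond that lemma, however, both halves of the theorem are left with genuine gaps. For the existence half, the passage from the lemma to the Helly property is exactly the nontrivial step, and your sketch does not bridge it: the lemma controls the \emph{in-degree} of unions, whereas Helly requires exhibiting a common \emph{vertex} of three pairwise properly intersecting in-solid sets, and ``repeated drops in $\varrho$ on unions'' does not by itself produce one. The missing argument has to combine the lemma with Claim \ref{claim4} in an extremal induction (for instance: take an in-solid $W\subseteq X\cup Y$ with $\varrho(W)\le \varrho(X\cup Y)$, rule out $W\subseteq X$ and $W\subseteq Y$ using in-solidity, and play $W$ off against the third set), and none of this is in the proposal. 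Chordality of the intersection graph is likewise asserted rather than argued, and since $\cFin$ may be exponential that graph cannot even be written down, so the Duchet--Flament--Slater characterization (not Gavril/Buneman, which produces a clique tree on maximal cliques rather than a spanning tree on $V$) must be applied with some care. For the algorithmic half you explicitly concede the central point -- that a tree assembled from a polynomial-size canonical subfamily represents \emph{every} in-solid set -- and that verification is the substance of the cited theorem; it is precisely what the paper calls ``quite surprising, given the fact that the number of insolid sets might be exponential.'' As written, the proposal identifies the right first lemma and a plausible road map, but it completes neither the subtree-hypergraph property nor the polynomial-time construction.
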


\newcommand{\tin}{\ensuremath{T_{in}}}

For a digraph $D$, let $\tin=\tin(D)$ denote a
representative tree for the family $\cFin$ of insolid
sets. In fact we will only need to solve Problem \ref{prob:maxsub} for
a set $V'$ that induces a subtree of \tin. Observe however that this
is not a restriction: if $\tin[V']$ is not connected then solving
Problem \ref{prob:maxsub} for the components of $\tin[V']$ and taking
the union of the obtained subpartitions gives a solution for
$V'$. Therefore we may assume in Problem \ref{prob:maxsub} that $V'$
induces a subtree of \tin.

Unfortunately, enumerating all the insolid sets is not possible,
because there can be exponentially many of them (an example can be
found in \cite{itoetal}). However, in the case of insolid sets, the
digraph itself provides a succint representation, thus we can query
$\cFin$ through certain oracles using the likes of minimum cut
algorithms.

We cite the following theorem that characterizes the optimum in 
Problem \ref{prob:maxsub} for an arbitrary subtree hypergraph. 
(Note that the notion of subtree hypergraphs is very general, and thus this 
min-max is outside of the realm of efficient algorithms, because a subtree hypergraph may not be given as part of the input. The theorem holds anyway, and we will apply in a way that it also implies a polynomial running time.) 
For sake of completeness, we also provide a proof and algorithm here.

\newcommand{\maxal}{MaxWeightMatching}
\newcommand{\val}{\ensuremath{{val}}}

\begin{thm}[Frank \cite{frank1975some}]
If $\cE\subseteq 2^{V'}$ is a subtree hypergraph 
and $\val:\cE\to \Rset_+$ is a weight function then
\begin{eqnarray}
\max\{\sum_{e\in \cE'} \val_e: \cE'\mbox{ is a collection of disjoint members of }\cE\}=\label{eq:matching}\\
\min\{\sum_{v\in V'}y_v: y:V'\to \Rset, y\ge 0, \sum_{v\in e}y_v\ge \val_e\mbox{ for every }e\in \cE\}.\label{eq:dualLP}
\end{eqnarray}
\end{thm}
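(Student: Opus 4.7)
The plan is to split the equality into the easy $\max\le\min$ direction (weak duality) and the harder $\max\ge\min$, which I would prove by a primal-dual construction exploiting the subtree hypothesis. For weak duality: given any disjoint $\cE'\subseteq\cE$ and any feasible $y$,
$$\sum_{e\in\cE'}\val_e \;\le\; \sum_{e\in\cE'}\sum_{v\in e}y_v \;\le\; \sum_{v\in V'}y_v,$$
where the first inequality holds by dual feasibility summed over $\cE'$, and the second by the pairwise disjointness of $\cE'$ together with $y\ge 0$.

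For the other direction, fix a basic tree $T$ of $\cE$ and root it arbitrarily. For each $e\in\cE$, since $e$ induces a subtree of $T$ it has a unique topmost vertex $t(e)\in e$, and $e\subseteq V(T_{t(e)})$ where $T_u$ denotes the subtree rooted at $u$. Process the vertices of $T$ in post-order (descendants before ancestors) and assign
$$y_u \;:=\; \max\!\bigg\{0,\ \max_{e\,:\,t(e)=u}\Big(\val_e-\sum_{x\in e\setminus\{u\}}y_x\Big)\bigg\}.$$
The right-hand side only involves previously-assigned values since $e\setminus\{u\}\subseteq V(T_u)\setminus\{u\}$. One checks $y\ge 0$ and that for every $e\in\cE$ we have $\sum_{x\in e}y_x\ge\val_e$, so $y$ is dual-feasible.

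Next construct $\cE'$ greedily in a pre-order sweep of $T$: whenever we reach a vertex $v$ with $y_v>0$ that is currently uncovered by $\bigcup\cE'$, add to $\cE'$ any \emph{tight} hyperedge $e^*$ with $t(e^*)=v$, i.e.\ one for which $\val_{e^*}=\sum_{x\in e^*}y_x$; such an $e^*$ exists from the defining maximum of $y_v$. The crux is that this step is always legal, namely $e^*\cap\bigcup\cE'=\emptyset$ at that moment. Any hyperedge $e_w$ already in $\cE'$ was placed at some previously-processed vertex $w$, which is either an ancestor of $v$ in $T$ or incomparable to $v$. In the incomparable case $V(T_w)\cap V(T_v)=\emptyset$, so $e_w\cap e^*=\emptyset$ because $e_w\subseteq V(T_w)$ and $e^*\subseteq V(T_v)$. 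In the ancestor case, if $e_w$ met $V(T_v)$ at any vertex $d$, then $e_w$ being a connected subtree containing both $w$ and $d$ would have to contain the entire $w$-to-$d$ path in $T$, in particular $v$, contradicting that $v$ is uncovered. Either way $e^*$ is addable. The same dichotomy shows that every $v$ with $y_v>0$ ends up in $\bigcup\cE'$ (either it was already covered on arrival by an ancestor's edge, or we just added one), so tightness of the chosen edges and disjointness of $\cE'$ yield
$$\sum_{e\in\cE'}\val_e \;=\; \sum_{e\in\cE'}\sum_{x\in e}y_x \;=\; \sum_{v\in\bigcup\cE'}y_v \;=\; \sum_{v\in V'}y_v,$$
matching the dual and completing the proof.

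The main obstacle is precisely the addability claim for $e^*$, and it relies essentially on the subtree hypothesis: two subtrees of a tree can only overlap ``vertically'', meaning that one of them has to contain the entire path between their closest points, and this is exactly what blocks a conflict whenever $v$ is still uncovered. Without the subtree structure the greedy step would break down, which is where the hypergraph assumption enters the argument.
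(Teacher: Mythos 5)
Your proof is correct and follows essentially the same route as the paper: set the dual values bottom-up (your explicit $\max\{0,\dots\}$ formula is exactly what the paper's ``decrease $y_{v_i}$ as much as possible'' amounts to, since the candidate hyperedges at $v_i$ are precisely those with topmost vertex $v_i$), then sweep top-down collecting tight sets at uncovered vertices with positive dual value. You additionally spell out the disjointness/coverage argument that the paper leaves implicit, but the underlying algorithm and min--max certificate are the same.
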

\begin{proof}
We give an algorithmic proof of this theorem. Consider the following LP problem.
\begin{eqnarray}\label{lp:prim1}
\max \sum_{e\in \cE}\val_ex_e\\
x:\cE\to \Rset, x\ge 0,\\
\sum_{e:v\in e}x_e\le 1\mbox{ for every }v\in V'.\label{lp:primut}
\end{eqnarray}
This LP is a relaxation of the maximum weight matching problem given in \eqref{eq:matching}, while its LP dual is the minimization problem \eqref{eq:dualLP}.
Weak duality shows that the maximum in the theorem cannot be larger
than the minimum.  For the proof we need to show that the primal LP
has an integer optimum solution for every weight function $\val$.
This is shown by the following dynamic programming algorithm.  A
sketch of the below algorithm goes as follows: Initially, we set node
weights $y_v$ to be very large. We specify a node $r$ to be the root
of the representative tree $T$, and start scanning the tree taking the
leaves first. When a node $v$ is scanned, its weight $y_v$ is lowered
as much as possible to maintain dual feasibility. When all nodes are
scanned, $\cE '$ is set up by tight sets, starting with one covering
the node with positive weight closest to the root node, and
recursively, adding tight sets for the remaining components. (A set
$e\in \cE$ is said to be \textbf{tight} with respect to the given feasible dual
solution $y$ if $\sum_{v:v\in e}y_v= \val_e$.) The algorithm is
detailed below.

\begin{pszkod}{Algorithm \maxal($V',T,\cE,\val$)}

\item[] INPUT: A tree $T$ on node set $V'$, 
  a family $\cE\subseteq 2^{V'}$ of subtrees of $T$,
  and a weight function $\val:\cE\to \Rset$ ($\cE$ and $\val$ are
  available via certain oracles: see later).

\item[] OUTPUT: A collection $\cE'$ of disjoint members of $\cE$ and a
  dual solution $y$ of the LP Problem \eqref{lp:prim1}-\eqref{lp:primut} so that
  $\sum_{e\in \cE'}\val_e=\sum_{v\in V'}y_v$.

\item \label{st:maxw} Find some value $M$ with $M>\max_{e\in \cE} \val_e$.

\item Initialize $y_v=M$ for every $v\in V'$.

\item Fix an arbitrary node $r\in V'$ and orient $T$ out of $r$ to get
  $\vec{T}$.

\item Let $r=v_1,v_2,\dots,v_n$ be an order of the nodes of $V$ so
  that $v_iv_j\in \vec{T}$ implies that $i<j$. Let $V_i$ be the node
  set of the subtree of $\vec{T}$ rooted at $v_i$ for every $i$.

\item For every $i=n,n-1,\dots,1$

\tab

  \item \label{st:miny} Decrease $y_{v_i}$ as much as possible so that
    $y$ remains feasible for Problem \eqref{eq:dualLP}:
    let $e_i$ be a minimizer of
    $\min\{y(e)-\val_e: v_i\in e\in \cE, e\subseteq V_i\}$ and let
    $y_{v_i}=\max(\val_{e_i}-y(e_i-v_i),0)$ (note that $e_i$ becomes
    tight, if $y_{v_i}$ stays positive).




\untab


\item Let $i=1$, $S=\emptyset$ and $\cE'=\emptyset$.\hfill{//Throughout $S\subseteq V$ and $\cE'\subseteq \cE$}

\item For  $i=1,2,\dots, n$ do

\tab

  \item If $y_{v_i}>0$ and $v_i\not\in S$ then

    \tab
      \item Let $\cE':=\cE' + \{e_i\}$ and $S:=S\cup e_i$
    \untab

    
\untab


\item Output $\cE'$ and $y$.

\end{pszkod}

The algorithm clearly proves the minmax theorem. In order to implement
it in polynomial time in $n=|V'|$ we have to provide the
subroutines needed in Steps \ref{st:maxw} and
\ref{st:miny}.
\end{proof}

\paragraph{Oracles for insolid sets}

Next, we analyze this algorithm for the solution of
Problem \ref{prob:maxsub} to show that it can be implemented in
polynomial time when $\cE$ is the family of insolid sets and $val
_e=k-\varrho _{D}(e)$ (for this weight function $y(e)\ge
val_e\ \forall e\in \cE$ implies $y(Z)\ge k-\varrho _{D}(Z)$ for every
non-empty $Z\subseteq V$). For that, we need to establish subroutines
for Steps \ref{st:maxw} and \ref{st:miny} that have running time
polynomial in the size of the graph.

It is easy to realize Step \ref{st:maxw} of Algorithm \maxal:
$M=k+1$ will be a good choice.

Step \ref{st:miny} can be realized with minimum cut computation as
follows. Let $V_i=V(T_{v_i})$ be the node set of the subtree of $T$
rooted at $v_i$ and define a  digraph $D'=(V+s,A')$
as follows: add a new node $s$ to $D$, introduce an arc $sv$ from $s$
to every $v\in V_i$ of multiplicity $y_v$ (note that $y$ stays integer during the algorithm).
Then by a minimum cut algorithm we find an inclusionwise minimal
minimizer $Z$ of $\min\{\varrho_{D'}(Z): v_i\in Z\subseteq V_i\}$.  
We claim that if $y(Z-v_i)< k-\varrho(Z)$ (that is, $y(v_i)$ will not
decrease to 0) then $Z$ is in-solid in $D$. By Claim \ref{claim4},
there is an insolid ($D$-insolid, to be precise) set $Z'\subseteq Z$
such that $\varrho_{D}(Z')\le \varrho_{D}(Z)$.  If $v_i\in Z'$ then
$Z'=Z$ by our choice of $Z$.  If $v_i\notin Z'$, then $y(Z')
+\varrho_D(Z')-k \le y(Z-v_i) + \varrho_D(Z) -k < 0$, contradicting
with $y(Z') \ge k- \varrho_D(Z')$.  In other words, we in fact
maintain $y(Z)\ge k-\varrho_D(Z)$ for every non-empty $Z\subseteq V'$
(besides non-negativity of $y$), and in Step \ref{st:miny} the set
$Z=e_i$ we find is insolid only if $y(v_i)$ does not decrease to zero.

This finally proves that Problem \ref{prob:maxsub} can be solved in
polynomial time.

\subsection{Enforcing the size requirement}

For an arbitrary  digraph $D=(V,A)$, nonempty subset
$V'\subseteq V$ and positive integer $k$, recall that $\bestsub(V')$
denotes an optimum solution of our unconstrained Problem
\ref{prob:maxsub} (that is, a maximizer of $\max\{\sum_{X\in
  \cX}(k-\varrho(X)): \cX$ is a subpartition of $V'\}$). This can be
found in polynomial time by the previous section: for a set $V'$ that
induces a subtree of $\tin$ we have $\bestsub(V')$ directly in the
output of $\maxal(V',\tin[V'],\cFin[V'], k-\varrho_{D})$, where $\cFin[V']$ denotes the family of insolid sets contained in $V'$.


\begin{pszkod}{Algorithm \bestconsub($D,k$)}

\item[] INPUT: A digraph $D=(V,A)$ and a positive integer $k$.

\item[] OUTPUT: A maximizer of $\max\{\sum_{X\in \cX}(k-\varrho(X)):
  |\cX|\ge 2, \cX$ is a subpartition of $V\}$

\item If the subpartition \bestsub($V$) has at least 2 members then
  output this and STOP.

\item Let $T= \tin$ be a basic tree of $D$. \label{st:Tin}

\item For every edge $e$ of $T$ consider at most 3 candidates defined as follows.

\tab

\item Let $V_1$ and $V_2$ be the node sets of the 2 components of $T-e$.

\item For $i=1,2$, let $X_i$ be an inclusionwise minimal minimizer of
  $\min\{\varrho_{D}(X): \emptyset\ne X\subseteq V_i\}$ (which can be
  found with minimum cut computations).

\item For both $i=1,2$, let $\cP_i=$\bestsub($V_i$).\label{st:bestsub}

\item Candidate 0 is $\{X_1,X_2\}$.

\item  If $|\cP_1|\ge 1$ then Candidate 1 is $\cP_1\cup \{X_2\}$.

\item  If $|\cP_2|\ge 1$ then Candidate 2 is $\cP_2\cup \{X_1\}$.

\untab

\item Output the best of the candidates above.

\end{pszkod}

\begin{thm}
Algorithm \bestconsub\ is correct.
\end{thm}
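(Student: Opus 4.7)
The plan is to show that the best candidate produced by the algorithm has the same $f$-value as $\cX^*$, where $f(\cX):=\sum_{X\in\cX}(k-\varrho(X))$ and $\cX^*$ is an optimal subpartition with $|\cX^*|\ge 2$. If $\bestsub(V)$ itself has at least two members, then it is both unconstrained- and constrained-optimal and we are done; so assume we reach the loop over edges of $T=\tin$. The easy direction is that every candidate is, by construction, a subpartition with at least two members (disjointness follows from $V_1\cap V_2=\emptyset$), so no candidate value exceeds $f(\cX^*)$.

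For the reverse direction I would take an optimal $\cX^*$ and first use Claim~\ref{claim4} to replace each member by an insolid subset of no larger $\varrho$-value; this does not decrease $f$, does not change the number of members, and preserves disjointness, so WLOG every $X\in\cX^*$ is insolid. By the B\'ar\'asz--Becker--Frank theorem, the members of $\cX^*$ then induce pairwise vertex-disjoint subtrees of $T$. The crucial structural step is to find an edge $e\in E(T)$ that, upon deletion, isolates exactly one member of $\cX^*$ on one side. To do this, root $T$ arbitrarily, let the \emph{apex} of $Y\in\cX^*$ be the vertex of $Y$ closest to the root, pick $Y_0\in\cX^*$ whose apex has maximum depth, and take $e$ to be the edge from this apex to its parent. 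Disjointness of the subtrees spanned by the members forces every other $Y'\in\cX^*$ to have its apex outside the subtree $V_1$ beneath $e$, and since $Y'$ is connected and cannot cross $e$, to lie entirely in the other component $V_2$. I expect this structural step to be the main difficulty, as it rests squarely on the subtree hypergraph property of insolid sets and requires some care to handle ties in apex depth and the position of the root.

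Once such an edge $e$ is fixed and we write $\cX^*=\{Y_0\}\cup\cX_2$ with $Y_0\subseteq V_1$ and $\cX_2$ a nonempty subpartition of $V_2$, optimality of $\cX^*$ lets me freely vary $Y_0$ over nonempty subsets of $V_1$ (the constraint $|\cX^*|\ge 2$ remains satisfied since $|\cX_2|\ge 1$), whence $\varrho(Y_0)=\varrho(X_1)$; likewise $\cX_2$ must be $f$-optimal among nonempty subpartitions of $V_2$. If $|\cP_2|\ge 1$, then $\cP_2$ is such a subpartition and $f(\cX^*)=f(\{X_1\}\cup\cP_2)$, matching Candidate~2. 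If $|\cP_2|=0$, then by Claim~\ref{cl:optempty} every nonempty $X\subseteq V_2$ has $\varrho(X)\ge k$, so the best nonempty subpartition of $V_2$ is the singleton $\{X_2\}$, giving $f(\cX^*)=f(\{X_1,X_2\})$, matching Candidate~0; Candidate~1 handles the symmetric case where the roles of $V_1,V_2$ are swapped.
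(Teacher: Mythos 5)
Your proof is correct and follows essentially the same route as the paper's: pass to an insolid optimal subpartition, use the representative tree to find an edge $e$ of $\tin$ whose removal isolates exactly one member in one component, and compare against Candidates 0/1/2, invoking Claim~\ref{cl:optempty} in the case $\bestsub(V_2)=\emptyset$. The only differences are presentational: you spell out the deepest-apex argument for locating $e$ (which the paper merely asserts) and argue uniformly via optimality of the restriction of $\cX^*$ to $V_2$ instead of splitting on $|\cP|=2$ versus $|\cP|\ge 3$.
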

\begin{proof}
Clearly, the Algorithm outputs a feasible solution (that is, a
subpartition with at least 2 members), since all its candidates are
feasible. Let $\cP$ be an optimal insolid subpartition. The proof is
complete if we show that one of the candidates is at least as good as
\cP.  
Choose an edge $e$ of \tin\ so that one of the components (call it $V_1$) of
$T-e$ contains only one member of $\cP$. If \cP\ has only 2 members
then clearly Candidate 0 for the edge $e$ is not worse than \cP, so we
can assume that $|\cP|\ge 3$.  Let $V_2$ be the other component of
$T-e$. If \bestsub($V_2$) has at least 1 member then Candidate 2 is
not worse than \cP. But if \bestsub($V_2$) has no members then Claim
\ref{cl:optempty} gives that $\varrho_{D_w}(X)\ge k$ for every
nonempty $X\subseteq V_2$, therefore Candidate 0 should not be worse
than \cP, finishing the proof.
\end{proof}

The following example shows that even if \bestsub($V$) is not feasible
(that is, it has only 1 member), \bestconsub($V$) might have more than 2
members: let $k=4$ and the digraph be directed circuit of size 3,
where every arc has multiplicity 3.

\subsection{Analysis}\label{sec:anal}

In the algorithm designed above to solve Problem \ref{prob:maxsub2},
we apply two steps. In the first step (Step \ref{st:Tin}) we determine
a basic tree representation of the in-solid sets, which according
to \cite{bbf} can be done in $n^3 S(n,m)$ time, where $n$ is the
number of nodes and m is the number of edges in $D$, and $S(n,m)$
denotes the time complexity of finding a minimum $s-t$-cut in a
digraph with $n$ nodes and $m$ arcs. In the second step
(Step \ref{st:bestsub}), we apply our algorithm for
Problem \ref{prob:maxsub} for $O(n)$ different subsets $U$ of $V$,
which are determined by removing an edge from the basic tree.  For any
given subset $U$, Problem \ref{prob:maxsub} is solved in time $n
S(n+1,3m)$, since we apply a minimum cut algorithm to determine the
value of $y(v)$ for all nodes $v$ in $U$ by a minimum cut computation
for graphs of at most $n+1$ nodes and at most $m+kn$ arcs. (To see
this, note that only one node s is added to the graph, and the number
of arcs added is equal to the sum of $y(v)$'s, which is equal to the
sum $\sum _{X\in \pi}(k-\varrho ( X))$ over a partition $\pi$ of
$U+s$.) Note that $kn \le 2m$, since otherwise there may be no $k$
disjoint arborescences. The total running time for the algorithm
amounts to $n^2 S(n+1,3m)$. Thus, by using Orlin's
algorithm \cite{Orlin} for minimum cut computations, the running time
is bounded by $O(n^4m)$ (the bottleneck being Step  \ref{st:Tin}).

\section{The weighted case}\label{sec:weight}

In this section we solve Problems \ref{prob:3} and \ref{prob:4}. Our
algorithms are only polynomial if $k$ is fixed (not part of the input).

\subsection{Weighted blocking of \krarb s}\label{sec:krarb}

First we give an algorithm solving Problem \ref{prob:4}. By Edmonds'
disjoint arborescence theorem, the optimum solution will be all but
$k-1$ arcs entering a nonempty subset $X\subseteq V-r$. Thus what we
do is that we guess which $k-1$ arcs remain.

\newcommand{\blockkarb}{\textsc{Blocking-\karb{s}}}
\newcommand{\blockkrarb}{\textsc{Blocking-\krarb s}}

\begin{pszkod}{Algorithm \blockkrarb}
\item[] INPUT: A digraph $D=(V,A)$, a weight function $w:A\to \Rset_+$, a node $r\in V$,
  and a positive integer $k$. (We assume that $\varrho(r)=0$.)

\item[] OUTPUT: A subset $H$ of the arcs so that there is no \krarb\ in $D-H$ and $w(H)$ is minimum.

\item Let $best=\infty$.

\item For any subset $E$ of $A$ with $|E|=k-1$ do

\tab

  \item Find a minimizer $X_0$ of   $\min\{\varrho_{D_w-E}(X):\emptyset\ne X\subseteq V-r\}$.

  \item If $\varrho_{D_w-E}(X_0)<best$ then let $best = \varrho_{D_w-E}(X_0)$ and $H = \delta^{in}_{D-E}(X_0)$

\untab

\item Output $H$.

\end{pszkod}

This algorithm runs in time $O(m^kHO(n,m))$ time where $HO(n,m)$ denotes the time complexity of determining $\min\{\varrho_{D_w}(X):\emptyset\ne X\subseteq V-r\}$ in an arc-weighted digraph $D_w$ with $n$ nodes and $m$ arcs. Using the algorithm of Hao and Orlin \cite{HaoOrlin} we have $HO(n,m)=O(nm\log(n^2/m))$, giving that Algorithm \blockkrarb\ has running time $O(m^knm\log(n^2/m))$.

\subsection{Weighted blocking of  \karb s}\label{sec:karb}

Now we turn to  Problem \ref{prob:3}. 

\begin{cl}\label{cl:prob3}
Given a digraph $D=(V, A)$ that contains a \karb, let $H\subseteq A$
such that $D-H$ does not contain a \karb, and $H$ is inclusionwise
minimal to this property. Then there exists a subpartition $\cX$ of
$V$ such that $2\le |\cX|\le k+1$ and $\sum_{X\in \cX}\varrho_{D-H}(X)=
k(|\cX|-1)-1$.
\end{cl}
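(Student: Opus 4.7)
The plan is to apply Frank's theorem (Theorem~\ref{thm:Frank}) twice, leveraging the minimality of $H$. First I would apply it to $D-H$ to obtain any subpartition $\cX$ of $V$ witnessing the absence of a \karb, that is, $\sum_{X\in\cX}\varrho_{D-H}(X) \le k(|\cX|-1)-1$. Denote the deficiency by $d := k(|\cX|-1) - \sum_{X\in\cX}\varrho_{D-H}(X) \ge 1$. Note that $|\cX|\ge 2$ is automatic, because for $|\cX|\le 1$ the inequality defining a violating subpartition is trivially satisfied with $d\le 0$. Also note that $H\neq\emptyset$, since $D$ itself contains a \karb\ by assumption.

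Next I would exploit minimality of $H$. Pick any arc $a\in H$. By minimality, $D-H+a$ contains a \karb, so Frank's theorem applied to $D-H+a$ yields $\sum_{X\in\cX}\varrho_{D-H+a}(X) \ge k(|\cX|-1)$. But the members of $\cX$ are pairwise disjoint, so adding the single arc $a$ increases the left-hand side by at most~$1$. Combining with the previous inequality forces $d \le 1$, and hence $d=1$. This proves the equation $\sum_{X\in\cX}\varrho_{D-H}(X) = k(|\cX|-1)-1$ for \emph{every} violating subpartition $\cX$, which already gives both the lower bound $|\cX|\ge 2$ and the equality asserted in the claim.

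For the upper bound $|\cX|\le k+1$, I would take $\cX$ to be a violating subpartition of minimum cardinality. Suppose for contradiction that $|\cX|\ge k+2$. For each $X\in\cX$ the subpartition $\cX\setminus\{X\}$ is strictly smaller than $\cX$, so by minimality it is not violating in $D-H$, giving $\sum_{Y\in\cX\setminus\{X\}}\varrho_{D-H}(Y)\ge k(|\cX|-2)$. Subtracting this from the equality $\sum_{X\in\cX}\varrho_{D-H}(X) = k(|\cX|-1)-1$ yields $\varrho_{D-H}(X) \le k-1$ for every $X\in\cX$. Summing these bounds over all members gives $k(|\cX|-1)-1 \le (k-1)|\cX|$, i.e., $|\cX|\le k+1$, contradicting the assumption.

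The main obstacle, as I see it, is the upper bound $|\cX|\le k+1$; the deficiency-equals-one assertion is a rather direct consequence of minimality combined with the disjointness of the members of a subpartition. The size bound requires the clever step of passing to a minimum-size violating subpartition and then performing the double counting above.
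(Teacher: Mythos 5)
Your proof is correct and follows essentially the same route as the paper's: apply Frank's theorem, take a violating subpartition of minimum cardinality to derive $\varrho_{D-H}(X)\le k-1$ for each member and hence $|\cX|\le k+1$, and use the inclusionwise minimality of $H$ (adding back a single arc) to pin the deficiency at exactly~$1$. The only differences are in bookkeeping — you make the "add one arc back" step explicit where the paper is terse, and you obtain the size bound by summing over all members rather than passing to a $(k+1)$-member subfamily — but the underlying argument is the same.
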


\begin{proof}
By Theorem \ref{thm:Frank}, there exists a subpartition \cX\ of $V$
such that $\sum_{X\in \cX}\varrho_{D-H}(X)< k(|\cX|-1)$: choose such
an \cX\ with $|\cX|$ smallest possible.  By this minimal choice of
$\cX$, $\varrho_{D-H}(X)< k$ for every $X\in \cX$ (if
$\varrho_{D-H}(X)\ge k$ for some $X\in \cX$ then $\cX-\{X\}$ would just as
well do). If $|\cX|>k+1$ then let $\cX'\subseteq \cX$ be arbitrary with
$|\cX'|=k+1$ and observe that $\sum_{X\in \cX'}\varrho_{D-H}(X)\le
(k+1)(k-1)<k^2$, contradicting the minimal choice of $\cX$. Therefore
$|\cX|\le k+1$ indeed holds ($|\cX|\ge 2$ is straightforward). 
By the minimality of $H$,
$H\subseteq \cup_{X\in \cX}\delta^{in}_D(X)$ and
$\sum_{X\in \cX}\varrho_{D-H}(X)= k(|\cX|-1)-1$.
\end{proof}

\newcommand{\opt}{\ensuremath{opt}}
\newcommand{\OPT}{\ensuremath{OPT}}
\newcommand{\alg}{\ensuremath{alg}}

By Claim \ref{cl:prob3}, the optimum solution  of
Problem \ref{prob:3} will be all but $k(|\cX|-1)-1$ arcs from a set
$\cup_{X\in \cX}\delta^{in}(X)$ for some subpartition \cX\ with $2\le
|\cX|\le k+1$. Our algorithm below guesses this set of remaining arcs.
As a subroutine we need an algorithm solving the following problem.

\begin{prob}\label{prob:minsub}
Given a digraph $D=(V,A)$, a weight function
$w:A\to \Rset_+$ and a positive integer $t$, determine
$\min\{\sum_{X\in \cX}\varrho_{D_w}(X): \cX$ is a subpartition of $V$
and $|\cX|=t\}$.
\end{prob}

This problem will be solved in Section \ref{sec:aux}.

\newcommand{\jel}{\ensuremath{candidate}}

\begin{pszkod}{Algorithm \blockkarb}

\item[] INPUT: A digraph $D=(V,A)$, a weight function $w:A\to \Rset_+$,  and a positive integer $k$. 

\item[] OUTPUT: $\min\{w(H): $ there is no \karb\ in $D-H\}$.


\item If there is no \karb\ in $D$ then output 0 and STOP.

\item Let $best=\infty$.

\item For $t=2, 3, \dots, k+1$ do

\tab
 
  \item For every $E\subseteq A$ of size $k(t-1)-1$ do

  \tab
 
    \item \label{st:aux} Let $\jel
    = \min\{\sum_{X\in \cX}\varrho_{D_w-E}(X): \cX$ is a subpartition
    of $V$ and $|\cX|=t\}$.

    \item If $\jel < best$ then $best = \jel$.

  \untab
  
\untab

\item Output $best$.

\end{pszkod}

For sake of simplicity we formulated Algorithm {\blockkarb} so
that it outputs the weight of the optimal arc set that blocks
all \karb s: the algorithm can be obviously modified to return the
optimal arc set instead. 
The running time 
of Algorithm \blockkarb\ 
will be analyzed in
Section \ref{sec:runtime}.
The proof of correctness is as follows.
Let \alg\ be the output of Algorithm \blockkarb\ and $\opt=w(\OPT)$ be
the optimum solution. Clearly, $\opt\le \alg$. On the other hand, by
Claim \ref{cl:prob3}, there exists a subpartition \cX\ such that
$\sum_{X\in \cX}\varrho_{D-\OPT}(X)= k(|\cX|-1)-1$ and $2\le |\cX|\le k+1$; for
$E= \cup_{X\in \cX}\delta^{in}_{D-\OPT}(X)$ the algorithm will find a candidate that
is not worse than \opt.

\subsubsection{Solution of Problem \ref{prob:minsub}}\label{sec:aux}

In this section we solve Problem \ref{prob:minsub} that is used in
Step \ref{st:aux} of Algorithm \blockkarb.  Clearly, we can assume
that the optimal soltution is an insolid subpartition.  Note the
difference between this problem and Problem \ref{prob:maxsub2}: here
we have exact restriction on the size of the subpartition to be found,
not just a lower bound.

\newcommand{\fixedpart}{\textsc{Best-Fixed-Subpart}}

\begin{pszkod}{Algorithm \fixedpart}
\item[] INPUT: A digraph $D=(V,A)$, a weight function $w:A\to \Rset_+$,  and a positive integer $t$. 

\item[] OUTPUT: $\min\{\sum_{X\in \cX}\varrho_{D_w}(X): \cX$ is a subpartition of $V$ and $|\cX|=t\}$.

\item Let $T$ be a representative tree for the insolid sets of the weighted digraph  $D_w$.

\item Let $best=\infty$

\item For every $F\subseteq E(T)$ of size $t-1$ do

\tab

        \item Let $Z_1, Z_2, \dots, Z_t$ be the node sets of the
        connected components of $T-F$.

        \item Let $\jel = 0$

        \item For $i=1, 2 \dots ,t$ do

        \tab
        
                \item $\jel \mathrel{+}= \min\{\varrho_{D_w}(X): \emptyset\ne
                X\subseteq Z_i\}$.

        \untab 

        \item If $\jel<best$ then $best = \jel$.
\untab


\item Output $best$.

\end{pszkod}


\begin{cl}
Algorithm \fixedpart\ returns a correct answer.
\end{cl}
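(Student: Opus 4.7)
The plan is to show that the value $best$ returned by Algorithm \fixedpart\ equals
\[
\opt := \min\Bigl\{\sum_{X \in \cX} \varrho_{D_w}(X) : \cX \text{ a subpartition of } V \text{ with } |\cX|=t\Bigr\},
\]
by proving the two inequalities $best \ge \opt$ and $best \le \opt$ separately.

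The inequality $best \ge \opt$ is the easy direction. For every $F \subseteq E(T)$ with $|F|=t-1$ examined by the algorithm, the components $Z_1,\ldots,Z_t$ of $T-F$ are non-empty and pairwise disjoint, so non-empty minimizers $X_i \subseteq Z_i$ of $\varrho_{D_w}$ form a valid subpartition of $V$ with exactly $t$ members whose weight sum is the candidate value for $F$. Hence every candidate the algorithm considers is itself a feasible value for the minimization in Problem \ref{prob:minsub}, so their minimum $best$ is at least $\opt$.

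For the reverse inequality $best \le \opt$, fix any optimal subpartition $\cX^* = \{X_1^*,\ldots,X_t^*\}$. By (the weighted analogue of) Claim \ref{claim4} applied to each member, we may assume every $X_i^*$ is insolid in $D_w$, so each $X_i^*$ induces a subtree of the basic tree $T$. The main technical step is then the following subtree separation lemma: if $X_1^*,\ldots,X_t^*$ are pairwise disjoint vertex sets of a tree $T$, each inducing a subtree, then there exists $F\subseteq E(T)$ with $|F|=t-1$ such that each component of $T-F$ contains exactly one $X_i^*$. I would prove this by contracting each $X_i^*$ in $T$ to a single vertex $x_i$, obtaining a tree $T''$, and then assigning every vertex of $T''$ to its nearest $x_i$ (with ties broken by smallest index). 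Using uniqueness of paths in $T''$ together with the triangle inequality, one checks that for any $v$ of color $i$ and any vertex $u$ on the path from $v$ to $x_i$, the vertex $u$ must also be of color $i$; hence each color class is a subtree of $T''$, and taking $F$ to be the $t-1$ edges of $T''$ between distinct classes (pulled back to edges of $T$) separates $T$ into the desired $t$ components.

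Given the lemma, applying it to $\cX^*$ yields an $F$ of size $t-1$ whose components $Z_1,\ldots,Z_t$ satisfy $X_i^* \subseteq Z_i$. The algorithm examines this $F$ and computes the candidate value $\sum_i \min\{\varrho_{D_w}(X):\emptyset\ne X\subseteq Z_i\} \le \sum_i \varrho_{D_w}(X_i^*) = \opt$, establishing $best \le \opt$. The only non-routine ingredient is the separation lemma; everything else is routine bookkeeping.
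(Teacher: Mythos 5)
Your proof is correct and follows essentially the same route as the paper's: the easy inequality because each candidate the algorithm evaluates is itself a feasible subpartition of size $t$, and the reverse inequality by passing to an insolid optimal subpartition (via Claim \ref{claim4}) and separating its members by $t-1$ edges of the representative tree. The only difference is that you give an explicit proof of the separation step (contracting each member and assigning vertices to their nearest contracted node), which the paper simply asserts; your argument for that step is sound.
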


\begin{proof}
Let \alg\ be the output of Algorithm \fixedpart\ and $\opt$ be
the optimum solution. Clearly, $\opt\le \alg$. On the other hand, we
can assume that $\opt = \sum_{X\in \cX}\varrho_{D_w}(X)$ for some
insolid subpartition $ \cX$ of $V$ with $|\cX|=t$. Since $T$ is a
representative tree of insolid sets of $D_w$, we can choose some
$F\subseteq E(T)$ with $|F|=t-1$ such that every component of $T-F$
contains exactly one member of \cX. For this particular $F$ the
algorithm will find a candidate not worse than \opt.
\end{proof}

\subsubsection{Running time}\label{sec:runtime}

In this section we analyze the running time of
Algorithm \blockkarb. Let $n$ and $m$ denote the number of nodes and
arcs of the input digraph $D$. Recall that $S(n,m)$ denotes the time
complexity of finding a minimum $s-t$-cut in a weighted digraph with
$n$ nodes and $m$ arcs, and similarly $HO(n,m)$ denotes the time
complexity of determining $\min\{\varrho_{D_w}(X): \emptyset \ne
X \subseteq V' \}$, where $V'\subseteq V$.  The running time of
Algorithm \fixedpart\ is $n^{t-1}HO(n,m)$ plus the time needed to
determine the representative tree $T$, which can be done in time
$O(n^3S(n,m))$, as mentioned in Section \ref{sec:anal}. Thus we get
$O(m^{k^2}(n^kHO(n,m)+n^3S(n,m)))$ as an overall complexity for
Algorithm \blockkarb. Substituting $S(n,m)=O(nm)$ (\cite{Orlin}) and
$HO(n,m)=O(nm\log(n^2/m))$ (\cite{HaoOrlin}) we get
$O(m^{k^2}(n^{k+1}m\log(n^2/m)+n^4m))$ for the running time.


\section{Acknowledgements}

The authors wish to thank Krist\'of B\'erczi and Tam\'as Kir\'aly for
observing the flaw in a previous version and for useful discussions on
the topic.

\bibliographystyle{amsplain} \bibliography{bkmincost}

\providecommand{\bysame}{\leavevmode\hbox to3em{\hrulefill}\thinspace}
\providecommand{\MR}{\relax\ifhmode\unskip\space\fi MR }
\providecommand{\MRhref}[2]{%
  \href{http://www.ams.org/mathscinet-getitem?mr=#1}{#2}
}
\providecommand{\href}[2]{#2}
\begin{thebibliography}{10}

\bibitem{bbf}
Mih{\'a}ly B{\'a}r{\'a}sz, Johanna Becker, and Andr{\'a}s Frank, \emph{An
  algorithm for source location in directed graphs}, Oper. Res. Lett.
  \textbf{33} (2005), no.~3, 221--230.

\bibitem{mincostarb}
Attila Bern{\'a}th and Gyula Pap, \emph{Blocking optimal arborescences},
  Integer Programming and Combinatorial Optimization, Springer, 2013,
  pp.~74--85.

\bibitem{edmondsdisjoint}
Jack Edmonds, \emph{Edge-disjoint branchings}, Combinatorial algorithms
  \textbf{9} (1973), 91--96.

\bibitem{frank1975some}
Andr{\'a}s Frank, \emph{Some polynomial algorithms for certain graphs and
  hypergraphs}, Proceedings of the Fifth British Combinatorial Conference,
  1975, pp.~211--226.

\bibitem{frank1978disjoint}
\bysame, \emph{On disjoint trees and arborescences}, Algebraic Methods in Graph
  Theory, Colloquia Mathematica Soc. J. Bolyai, vol.~25, 1978, pp.~159--169.

\bibitem{frank1979covering}
\bysame, \emph{Covering branchings}, Acta Scientiarum Mathematicarum (Szeged)
  \textbf{41} (1979), no.~77-81, 3.

\bibitem{HaoOrlin}
Jianxiu Hao and James~B. Orlin, \emph{A faster algorithm for finding the
  minimum cut in a directed graph}, JOURNAL OF ALGORITHMS \textbf{17} (1994),
  424--446.

\bibitem{itoetal}
Hiro Ito, Kazuhisa Makino, Kouji Arata, Shoji Honami, Yuichiro Itatsu, and
  Satoru Fujishige, \emph{Source location problem with flow requirements in
  directed networks}, Optimization Methods and Software \textbf{18} (2003),
  no.~4, 427--435.

\bibitem{Orlin}
James~B. Orlin, \emph{Max flows in {O}(nm) time, or better}, Proceedings of the
  Forty-fifth Annual ACM Symposium on Theory of Computing (New York, NY, USA),
  STOC '13, ACM, 2013, pp.~765--774.

\bibitem{schrijver}
Alexander Schrijver, \emph{Combinatorial optimization: polyhedra and
  efficiency}, vol.~24, Springer Verlag, 2003.

\end{thebibliography}

\end{document}